\newtheorem{theorem}{Theorem}
\newtheorem{corollary}[theorem]{Corollary}
\newtheorem{lemma}[theorem]{Lemma}
\newtheorem{definition}{Definition}
\title[Olver's Method for Solving Roots of $p$-Adic Polynomial Equations]
{Olver's Method for Solving Roots of $p$-Adic Polynomial Equations}
\author[J. F. T. Rabago]{Julius Fergy T. Rabago}
\address{Julius Fergy Tiongson Rabago, Department of Mathematics and Computer Science,
College of Science, University of the Philippines Baguio, Baguio City 2600, Benguet, PHILIPPINES} \email{jfrabago@gmail.com}
\subjclass[2000]{Primary: 11E95, Secondary: 34K28.}
\keywords{Olver's method, $p$-adic polynomials, $p$-adic numbers, roots of polynomials.}
\date{\today}
\begin{document}
\maketitle

\begin{abstract}
Let $\mathbb{Z}_p[x]$ be the set of all functions whose coefficients are in the field of $p$-adic integers $\mathbb{Z}_p$.
This work considers a problem of finding a root of a polynomial equation $P(x)=0$ where $P(x)\in\mathbb{Z}_p[x]$. 
The solution is approached through an analogue of Olver's method for finding roots of polynomial equations $P(x)=0$ in $\mathbb{Z}_p$.
\end{abstract}

\section{Introduction}
A $p$-adic number, which was first introduced in 1897 by Kurt Hensel, is an extension of the field of rationals $\mathbb{Q}$ such that congruences modulo powers of a fixed prime $p$ are related to proximity in the so called  \emph{$p$-adic metric}.
This metric provides a totally different notion of `closeness' or absolute value. 
To be precise, two $p$-adic numbers are said to be close when their difference is divisible by a high power of $p$, meaning to say, the higher the power the closer they are.
This notion of closeness obviously shows that the field of $p$-adic numbers denoted by $\mathbb{Q}_p$ extends the ordinary arithmetic in $\mathbb{Q}$ in a way different from the extension of $\mathbb{Q}$ to the real and complex number systems $\mathbb{R}$ and $\mathbb{C}$.  
Moreover, the creation of $p$-adic numbers were actually due to an attempt to bring the ideas and techniques of power series methods into number theory. 
So at first, just like many other fields of mathematics, $p$-adic numbers are considered as an exotic part of pure mathematics without any application. 
However, various applications to other fields of mathematics, especially in analysis and algebraic geometry, and related areas in the applied sciences (e.g., physics and bioinformatics) have been recently proposed and discovered.
For instance, the field of $p$-adic analysis essentially provides an alternative form of calculus (cf. \cite{baker, vladimirov}) and the $p$-adic numbers appear to have some applications in modeling DNA sequences and genetic codes (cf. \cite{dragovich}). 
In 1968, Monna and van der Blij proposed to apply $p$-adic numbers to physics and in 1972, Beltrametti and Cassinelli investigated a model of $p$-adic valued quantum mechanics from the positions of quantum logic. 
$p$-adic numbers were also found to have some sort of applications in quantum physics since 1980's (cf. \cite{razikov}).
For more applications in the physical sciences, especially in the construction of physical models (e.g., string theory, quantum mechanics, quantum cosmology and dynamical systems), we refer the readers to \cite{brekke} and \cite{vladimirov}.
One may also want to consult an article of Razikov \cite{razikov} (and see the references therein) for a popular introduction to the theory of $p$-adic numbers.

On the other hand, in a purely theoretical aspect, Hensel\rq{}s lemma provides sufficient conditions for the existence of roots in $\mathbb{Z}_p$ of polynomials in $\mathbb{Z}_p[x]$.
A classical application of this lemma deals with the problem of finding roots of a $p$-adic number $a$ in $\mathbb{Q}_p$ and this 
was in fact the subject of several recent investigations about $p$-adic numbers.  
In 2010, for instance, Knapp and Xenophontos \cite{knapp} showed how classical root-finding methods from numerical analysis can be used to calculate inverses of units modulo prime powers. 
In the same year, Zerzaihi, Kecies and Knapp \cite{zer1} applied some classical root-finding methods, such as the fixed-point method, in finding square roots of $p$-adic numbers through Hensel\rq{}s lemma.
In 2011, Zerzaihi and Kecies \cite{zer2} used secant method to find the cubic roots of $p$-adic numbers. 
These authors \cite{zer3} then applied the Newton method to find the cubic roots of $p$-adic numbers in $\mathbb{Q}_p$. 
A similar problem also appeared in \cite{ignacio} wherein Ignacio et al. computed the square roots of $p$-adic numbers via Newton-Raphson method. 
In \cite{bacani} and \cite{rabago}, it was observed that none of these aforementioned works have considered the problem of finding roots of a general $p$-adic polynomial.
So, motivated by this problem, Bacani and the author proposed in \cite{bacani} an analogue of Steffensen's method in finding roots of a general $p$-adic polynomial equation $f(x)=0$ in $\mathbb{Z}_p$.
In \cite{rabago}, on the other hand, the author described an analogue of Halley's method for approximating roots of $p$-adic polynomial equations $f(x)=0$ in $\mathbb{Z}_p$.
A related study which examines a $p$-adic analogue of Newton-Raphson's method was also considered in \cite{rabago0}. 
In this work we offer another approach in solving a root-finding problem $f(x)=0$ in the $p$-adic case through Olver's method.
We shall show that the method, even in the $p$-adic case, is more efficient compare to Steffensen's and Newton's method in terms of convergence property.
Moreover, we shall see that the $p$-adic analogue of Olver's method is faster than Halley's method in computing $2$-adic roots of polynomial equations in $\mathbb{Z}_p$. 

The rest of the paper is organized as follows. 
In Section 2, we discuss some concepts about the field $\mathbb{Q}_p$ and present some ideas about continuity and differentiability of functions on $\mathbb{Q}_p$. 
Our main theorem is presented in Section 3 along with its proof and lastly, in Section 4, a short conclusion about our present work is stated.

\section{Preliminaries}

In this section we discuss some basic properties of $\mathbb{Q}_p$. 
A more formal treatment of the topic can be found in a book of Katok on $p$-adic numbers \cite{katok}.

\subsection{\bf The field $\mathbb{Q}_p$.}

We start with the definition of the $p$-adic norm $|\cdot|_p$ and the $p$-adic valuation $v_p$ on $\mathbb{Q}$.

\begin{definition}
Let $p$ be a fixed prime. 
The $p$-adic norm $|\cdot|_p:\mathbb{Q}\rightarrow \{p^n : n\in\mathbb{Z}\} \cup \{0\}$ is defined as follows:
\[
\forall x\in\mathbb{Q}: |x|_p =\left\{
\begin{array}{ll}
p^{-v_p(x)}, 	&\text{if} \ x\neq 0,\\[0.5em]
0, 			&\text{if}\ x=0,
\end{array}
\right.
\]
where $v_p$ is the $p$-adic valuation defined by $v_p(x)=\max\{r\in \mathbb{Z}:p^r \mid x\}$. 
This norm induces the so-called $p$-adic metric $d_p$ given by
\begin{align*}
d_p: \mathbb{Q} \times \mathbb{Q} 	\quad &\longrightarrow \quad \mathbb{R}^+\\
(x,y) 							\quad &\longmapsto \quad d_p(x,y)=|x-y|_p.
\end{align*}
\end{definition}

The $p$-adic norm $|\cdot|_p$ satisfies the following important properties (cf. \cite{katok}):
\begin{enumerate}
\item[(i)] $|xy|_p=|x|_p|y|_p$;
\item[(ii)] $|x+y|_p \leq \mbox{max}\{|x|_p,|y|_p\}$, where equality holds if $|x|_p\neq |y|_p$; and
\item [(iii)] $\left|\dfrac{x}{x}\right|_p=\dfrac{|x|_p}{|y|_p}$.
\end{enumerate}

Now the field of $p$-adic numbers $\mathbb{Q}_p$ is formally defined as follows.

\begin{definition}
\label{fieldQp}
The field $\mathbb{Q}_p$ of $p$-adic numbers is the completion of $\mathbb{Q}$ with respect to the $p$-adic norm $|\cdot|_p$. 
The elements of $\mathbb{Q}_p$ are equivalence classes of Cauchy sequences in $\mathbb{Q}$ with respect to the extension of the $p$-adic norm defined as
\[
|a|_p=\lim_{n \rightarrow \infty} |a_n|_p,
\]
where $\{a_n\}$ is a Cauchy sequence of rational numbers representing $a \in \mathbb{Q}_p$.
\end{definition}

The following theorem provides a way to write a $p$-adic number in a unique representation.

\begin{theorem}
\label{canonical}
Given a $p$-adic number $a \in \mathbb{Q}_p$, 
there is a unique sequence of integers $(a_n)_{n\geq N}$, with $N=v_p(a)$, such that $0\leq a_n \leq p-1$ for all $n$ and 
\[
a=a_Np^N + a_{N+1}p^{N+1} + \cdots +a_np^n+\cdots
=\sum_{k=N}^{\infty}a_i p^i.
\]
\end{theorem}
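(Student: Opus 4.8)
The plan is to reduce to the case where $a$ has $p$-adic norm $1$, peel off the digits one at a time by an inductive procedure, and then verify convergence and uniqueness using only property (ii) (the ultrametric inequality) together with the description of $\mathbb{Q}_p$ as a completion from Definition~\ref{fieldQp}. For the reduction, put $N=v_p(a)$; by property (i) the element $p^{-N}a$ has norm $1$, and an expansion $p^{-N}a=\sum_{k\ge 0}d_kp^k$ with $d_k\in\{0,\dots,p-1\}$ and $d_0\ne 0$ yields $a=\sum_{k\ge 0}d_kp^{k+N}$, which is exactly the claimed expansion (with $a_{N+k}=d_k$ and $a_N=d_0\ne 0$). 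Hence it suffices to prove: every $b\in\mathbb{Q}_p$ with $|b|_p\le 1$ has a unique expansion $b=\sum_{k\ge 0}b_kp^k$ with $b_k\in\{0,\dots,p-1\}$, and $b_0\ne 0$ when $|b|_p=1$.

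The heart of the matter is the claim that every $b$ with $|b|_p\le 1$ has a \emph{unique} $b_0\in\{0,1,\dots,p-1\}$ with $|b-b_0|_p\le p^{-1}$. Uniqueness is immediate: if $b_0,b_0'$ both work then property (ii) gives $|b_0-b_0'|_p\le\max\{|b-b_0|_p,|b-b_0'|_p\}\le p^{-1}<1$, so $p\mid(b_0-b_0')$, and since $|b_0-b_0'|<p$ this forces $b_0=b_0'$. For existence I would take a Cauchy sequence $(x_n)$ of rationals representing $b$; since $|b|_p=\lim_n|x_n|_p\le 1$ and each $|x_n|_p$ is a power of $p$ or $0$, eventually $|x_n|_p\le 1$, so $x_n=u_n/v_n$ with $p\nmid v_n$. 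Choosing $m_n\in\{0,\dots,p-1\}$ with $v_nm_n\equiv u_n\pmod p$ gives $|x_n-m_n|_p\le p^{-1}$, and property (ii) shows that the $m_n$ are eventually constant, say equal to $b_0$, whence $|b-b_0|_p=\lim_n|x_n-b_0|_p\le p^{-1}$. (If it has already been established that $\mathbb{Z}$ is dense in $\{\,x\in\mathbb{Q}_p:|x|_p\le 1\,\}$, this step is immediate.)

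Next I would iterate the claim. Applying it to $b$ itself produces $b_0$, and if $|b|_p=1$ then $b_0\ne 0$ (otherwise $|b|_p=|b-0|_p\le p^{-1}$). Setting $b^{(1)}:=p^{-1}(b-b_0)$ gives $|b^{(1)}|_p\le 1$, so the claim applies again and yields $b_1$; continuing, one inductively defines $b^{(n)}:=p^{-n}\bigl(b-\sum_{k=0}^{n-1}b_kp^k\bigr)$, which has $|b^{(n)}|_p\le 1$, and lets $b_n\in\{0,\dots,p-1\}$ be its leading digit. The partial sums $s_n:=\sum_{k=0}^{n-1}b_kp^k$ then satisfy $|b-s_n|_p=|p^nb^{(n)}|_p\le p^{-n}\to 0$, so $\sum_{k\ge 0}b_kp^k$ converges to $b$ in $\mathbb{Q}_p$. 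For uniqueness of the whole expansion, suppose two such series with digits in $\{0,\dots,p-1\}$ both equal $b$, and let $j$ be the least index where the digits differ, say $\alpha_j\ne\beta_j$; subtracting gives $(\alpha_j-\beta_j)p^j=-\sum_{k>j}(\alpha_k-\beta_k)p^k$, where the left-hand side has norm exactly $p^{-j}$ (because $\alpha_j-\beta_j\not\equiv 0\pmod p$) while property (ii) bounds the right-hand side by $p^{-(j+1)}$ — a contradiction. This also pins down $N$ as the least index carrying a nonzero digit.

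I expect the only genuine difficulty to be the existence half of the first-digit claim, that is, translating the abstract Cauchy-sequence model of $\mathbb{Q}_p$ into honest integer digits; this is precisely the point where the ``closeness'' structure of $\mathbb{Q}_p$ is used, via the fact that a rational of $p$-adic norm at most $1$ lies within $p^{-1}$ of one of $0,1,\dots,p-1$. Everything else amounts to routine bookkeeping with the ultrametric inequality.
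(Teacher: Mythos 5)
Your argument is correct. Note that the paper itself gives no proof of Theorem~\ref{canonical}; it is quoted as a standard fact and deferred to Katok's text, and your digit-peeling construction is precisely the standard argument found there: reduce to $|b|_p\le 1$, extract the unique residue $b_0\in\{0,\dots,p-1\}$ with $|b-b_0|_p\le p^{-1}$, iterate on $p^{-1}(b-b_0)$, and settle uniqueness by the ultrametric inequality at the first differing digit. The one step you rightly flag as the only nontrivial point --- producing $b_0$ from the abstract Cauchy-sequence model, i.e.\ the density of $\mathbb{Z}$ in $\{x:|x|_p\le 1\}$ --- is handled correctly via the eventual stabilization of the residues $m_n$, so there is no gap.
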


With such representation as above, a $p$-adic number is naturally defined as a number $a\in\mathbb{Q}_p$ whose canonical expansion contains only nonnegative powers of $p$.
The set of $p$-adic integers is denoted by $\mathbb{Z}_p$ and is given by
\[
\mathbb{Z}_p=\left\{a\in \mathbb{Q}_p : a = \sum_{i=0}^{\infty} a_ip^i, \ 0 \leq a_i \leq p-1\right\} =\left\{a \in \mathbb{Q}_p : |a|_p\leq1\right\}.
\]

\begin{definition}
The group of invertible elements in $\mathbb{Z}_p$ (or the group of $p$-adic units) denoted by $\mathbb{Z}_p^{\times}$ is given by
\[
\mathbb{Z}_p^{\times} 
=\left\{ a\in \mathbb{Z}_p : a = \sum_{i=0}^{\infty} a_ip^i, \ a_0 \neq 0\right\} 
=\left\{a \in \mathbb{Q}_p : |a|_p=1\right\}.
\]
\end{definition}

By virtue of Theorem \ref{canonical} we may write, in an alternative way, a $p$-adic number in terms of their $p$-adic valuation.

\begin{corollary}
Let $a \in \mathbb{Q}_p$. 
Then, $a=p^{v_p(a)}u$ for some $u \in \mathbb{Z}_p^{\times}$.
\end{corollary}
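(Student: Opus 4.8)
The plan is to read the factorization straight off the tools assembled in this section. The cleanest route uses the multiplicativity of the $p$-adic norm (property (i) above) together with the characterization $\mathbb{Z}_p^{\times}=\{x\in\mathbb{Q}_p:|x|_p=1\}$ recorded after the definition of $\mathbb{Z}_p^{\times}$. First I would dispose of the degenerate case: if $a=0$ there is nothing to prove (with the usual convention $v_p(0)=+\infty$, or simply by excluding $a=0$), so I assume $a\neq 0$ and set $m:=v_p(a)\in\mathbb{Z}$, which is a well-defined integer precisely because $a\neq 0$.

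Next I would define $u:=p^{-m}a$. Since $\mathbb{Q}_p$ is a field and $p^{-m}\in\mathbb{Q}_p$, the element $u$ again lies in $\mathbb{Q}_p$, and by construction $a=p^{m}u=p^{v_p(a)}u$. It then remains only to verify that $u$ is a unit. Using property (i) and $|p|_p=p^{-1}$ one computes $|u|_p=|p^{-m}|_p\,|a|_p=p^{m}\cdot p^{-v_p(a)}=1$, so $u\in\{x\in\mathbb{Q}_p:|x|_p=1\}=\mathbb{Z}_p^{\times}$, which finishes the argument. The same computation also shows $u$ is uniquely determined, since necessarily $u=a/p^{v_p(a)}$.

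As an alternative that makes the statement even more transparent, I would invoke Theorem~\ref{canonical} to write $a=\sum_{k\geq N}a_kp^{k}$ with $N=v_p(a)$ and leading digit $a_N\neq 0$, factor out $p^{N}$ to obtain $a=p^{N}\sum_{j\geq 0}a_{N+j}p^{j}$, and observe that $u:=\sum_{j\geq 0}a_{N+j}p^{j}$ has nonzero constant digit $a_N$, hence $u\in\mathbb{Z}_p^{\times}$ by the defining description of $\mathbb{Z}_p^{\times}$.

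I do not expect a genuine obstacle here. The only points requiring a little care are the sign bookkeeping in $|p^{-m}|_p=p^{m}$ and the tacit hypothesis $a\neq 0$ needed for $v_p(a)$ to be an integer; everything else is immediate from the properties of $|\cdot|_p$ and the two descriptions of $\mathbb{Z}_p^{\times}$ already available.
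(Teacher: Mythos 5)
Your proposal is correct. The paper offers no explicit proof; it presents the corollary as an immediate consequence of Theorem~\ref{canonical}, i.e.\ precisely the second argument you sketch: write $a=\sum_{k\geq N}a_kp^k$ with $N=v_p(a)$ and $a_N\neq 0$, factor out $p^N$, and note that the remaining factor has nonzero constant digit, hence lies in $\mathbb{Z}_p^{\times}$. Your primary route --- setting $u:=p^{-v_p(a)}a$ and checking $|u|_p=1$ via multiplicativity of the norm --- is a clean equivalent that trades the digit description of $\mathbb{Z}_p^{\times}$ for its norm description $\{x:|x|_p=1\}$; it has the small advantage of also giving uniqueness of $u$ for free, and your remark about the degenerate case $a=0$ (where $v_p(a)$ is not an integer) is a point the paper glosses over. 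Either argument is complete and both rest only on facts already recorded in the section.
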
 

The following lemma will be central to our discussion.

\begin{lemma}
\label{estimate}
Let $a,b\in \mathbb{Q}_p$. Then, 
\[
a\equiv b \ ({\rm mod}\ p^m) \quad \Longleftrightarrow \quad |a-b|_p\leq p^{-m}.
\]
\end{lemma}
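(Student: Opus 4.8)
The plan is to reduce the equivalence to a single statement about the $p$-adic valuation, namely that both sides are equivalent to $v_p(a-b)\ge m$. Set $c=a-b$. First I would dispose of the trivial case $c=0$: then $a\equiv b\ ({\rm mod}\ p^m)$ holds by definition, and $|c|_p=0\le p^{-m}$, so both sides are true. For $c\neq 0$, I would invoke the corollary above to write $c=p^{v_p(c)}u$ with $u\in\mathbb{Z}_p^{\times}$; by the multiplicative property (i) of the norm and the fact that $|u|_p=1$ for a unit, this gives $|c|_p=p^{-v_p(c)}$.

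Next I would unwind the congruence. By definition, $a\equiv b\ ({\rm mod}\ p^m)$ means $p^m\mid c$ in $\mathbb{Z}_p$, i.e. $c=p^m d$ for some $d\in\mathbb{Z}_p$. Substituting $c=p^{v_p(c)}u$, this is equivalent to $p^{\,v_p(c)-m}u=d\in\mathbb{Z}_p$, and since $u$ is a unit this holds precisely when $v_p(c)-m\ge 0$, that is, $v_p(c)\ge m$. (Alternatively one can read this directly off Theorem \ref{canonical}: writing $c=\sum_{i\ge v_p(c)}c_i p^i$ with $c_{v_p(c)}\neq 0$, divisibility by $p^m$ is equivalent to the absence of any term of index $<m$, i.e. $v_p(c)\ge m$.) On the analytic side, $|c|_p\le p^{-m}$ reads $p^{-v_p(c)}\le p^{-m}$, which, since $t\mapsto p^{-t}$ is decreasing, is again equivalent to $v_p(c)\ge m$. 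Chaining the two equivalences through $v_p(a-b)\ge m$ yields the claim.

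The argument is essentially bookkeeping, so there is no serious obstacle; the one point I would state carefully is that factoring out $p^{v_p(c)}$ genuinely leaves a $p$-adic \emph{unit}, so that divisibility of $c$ by $p^m$ in $\mathbb{Z}_p$ is governed exactly by the valuation. This is where the characterization $\mathbb{Z}_p^{\times}=\{x\in\mathbb{Q}_p:|x|_p=1\}$ (equivalently, the corollary) is used, together with property (i) of the norm; everything else follows from the monotonicity of $t\mapsto p^{-t}$.
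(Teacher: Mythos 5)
Your argument is correct: reducing both sides to the single condition $v_p(a-b)\ge m$ via the factorization $a-b=p^{v_p(a-b)}u$ with $u\in\mathbb{Z}_p^{\times}$ is the standard proof, and your handling of the $a=b$ case and of the definition of divisibility by $p^m$ in $\mathbb{Z}_p$ is sound. The paper itself states this lemma without proof (treating it as a known fact from the theory of $\mathbb{Q}_p$), so there is no proof to compare against; your write-up supplies exactly the routine valuation bookkeeping that the paper implicitly relies on.
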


\subsection{\bf Functions over $\mathbb{Q}_p$.}

In this section we discuss some fundamental concepts on the analysis of functions defined over $\mathbb{Q}_p$.

Let $X \subset \mathbb{Q}_p$. 
A function $f : X \rightarrow \mathbb{Q}_p$ is said to be continuous at $a \in X$ 
if for each $\varepsilon > 0$ there exists a $\delta > 0$ such that if $|x - a|_p < \delta$, then $|f(x) - f(a)|_p < \varepsilon$. 
A function $f$ is said to be continuous on $E \subseteq X$ if $f$ is continuous for every $a \in E$.
Also, let $a\in X$ be an accumulation point of $X$. 
Then, the function $f$ is differentiable at $a$ if the derivative of $f$ at $a$, defined by
\[
f'(a)=\lim_{x\rightarrow a} \frac{f(x)-f(a)}{x-a}
\]
exists. 
In general, $f$ will be differentiable on $X$ if $f'(a)$ exists at all $a \in X$.

As a simple example, any polynomial function $f$ in $\mathbb{Q}_p[x]$ is contiuous and differentiable at every $a\in\mathbb{Q}_p$ (cf. \cite{rabago}).

\subsection{\bf $p$-Adic roots.}

The following are some important results in the study of $p$-adic roots.
The following theorem can be found, e.g., in \cite[p. 48]{robert}.

\begin{theorem}[Hensel's lemma] 
Let $F$ be a polynomial of degree $q\in\mathbb{N}$ whose coefficients are $p$-adic integers, i.e.,
\[
F(x)=c_0 + c_1x+c_2x^2+\ldots+c_qx^q \in \mathbb{Z}_p[x]
\] 
and 
\[
F'(x)=c_1 + 2c_2x+3c_3x^2+\ldots+qc_qx^{q-1}
\] 
be its derivative. 
Suppose for $\overline{a_0} \in \mathbb{Z}_p$ we have $F(\overline{a_0}) \equiv 0 \ ({\rm mod}\ p)$ and $F'(\overline{a_0})  \not\equiv 0 \ ({\rm mod}\ p)$.
Then, there is a unique $a \in \mathbb{Z}_p$ such that $F(a) = 0$ and $a \equiv \overline{a_0} \ ({\rm mod}\ p)$.
\end{theorem}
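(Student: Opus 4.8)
The plan is to realize the root $a$ as the limit of a $p$-adic Newton iteration and then push the equation $F(a)=0$ through to the limit using completeness of $\mathbb{Q}_p$ together with the continuity of polynomials. First I would set $a_0=\overline{a_0}$ and define recursively $a_{n+1}=a_n-F(a_n)/F'(a_n)$. Before anything else one has to check the recursion stays inside $\mathbb{Z}_p$: by induction I would maintain simultaneously the three statements $a_n\in\mathbb{Z}_p$, $a_n\equiv a_0\ ({\rm mod}\ p)$, and $|F(a_n)|_p\leq p^{-(n+1)}$. The congruence $a_n\equiv a_0$ forces $F'(a_n)\equiv F'(a_0)\not\equiv 0\ ({\rm mod}\ p)$, because $F'\in\mathbb{Z}_p[x]$ and $F'(a_n)-F'(a_0)$ is divisible by $a_n-a_0$; hence $F'(a_n)\in\mathbb{Z}_p^{\times}$, so $F(a_n)/F'(a_n)\in\mathbb{Z}_p$ (its norm is $|F(a_n)|_p\leq 1$), and $a_{n+1}\in\mathbb{Z}_p$, $a_{n+1}\equiv a_0\ ({\rm mod}\ p)$ follow at once.

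The substantive step is propagating the estimate $|F(a_n)|_p\leq p^{-(n+1)}$. For this I would expand $F$ about $a_n$: writing $F(X)=\sum_{k=0}^{q}b_k(X-a_n)^k$ with all $b_k\in\mathbb{Z}_p$ (the binomial expansion of $X^i=((X-a_n)+a_n)^i$ shows the coefficients stay in $\mathbb{Z}_p$), one has $b_0=F(a_n)$ and $b_1=F'(a_n)$. Substituting $X=a_{n+1}$, so that $X-a_n=-F(a_n)/F'(a_n)$ and $|X-a_n|_p=|F(a_n)|_p$ since $F'(a_n)$ is a unit, the $k=0$ and $k=1$ terms cancel and every remaining term has norm at most $|F(a_n)|_p^2$. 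The ultrametric inequality then gives $|F(a_{n+1})|_p\leq|F(a_n)|_p^2\leq p^{-2(n+1)}\leq p^{-(n+2)}$, which closes the induction (quadratic convergence, in fact). In particular $|a_{n+1}-a_n|_p=|F(a_n)|_p\leq p^{-(n+1)}\to 0$, so $(a_n)$ is Cauchy and, by Definition \ref{fieldQp}, converges to some $a\in\mathbb{Q}_p$; since each $|a_n|_p\leq 1$ the ultrametric inequality yields $|a|_p\leq 1$, i.e. $a\in\mathbb{Z}_p$. Continuity of polynomial functions (Section 2.2) gives $F(a)=\lim_n F(a_n)=0$, and $|a-a_0|_p\leq\sup_n|a_{n+1}-a_n|_p\leq p^{-1}$ gives $a\equiv\overline{a_0}\ ({\rm mod}\ p)$ by Lemma \ref{estimate}.

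For uniqueness, suppose $b\in\mathbb{Z}_p$ also satisfies $F(b)=0$ and $b\equiv\overline{a_0}\ ({\rm mod}\ p)$; then $b\equiv a\ ({\rm mod}\ p)$, so $|b-a|_p\leq p^{-1}$. Dividing the monic polynomial $X-a$ into $F(X)-F(a)\in\mathbb{Z}_p[x]$ produces $F(X)-F(a)=(X-a)G(X)$ with $G\in\mathbb{Z}_p[x]$ and $G(a)=F'(a)$. Evaluating at $b$ gives $0=(b-a)G(b)$, and writing $G(b)=F'(a)+(b-a)H$ with $H\in\mathbb{Z}_p$ one has $|(b-a)H|_p\leq p^{-1}<1=|F'(a)|_p$; the "equality when the norms differ" clause of the ultrametric inequality then forces $|G(b)|_p=1$, so $G(b)$ is invertible and $b=a$.

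I expect the main obstacle to be the bookkeeping in the inductive step of the second paragraph: one has to set up the finite Taylor expansion of $F$ about the moving point $a_n$ with coefficients genuinely in $\mathbb{Z}_p$, and then carry all three invariants ($a_{n+1}\in\mathbb{Z}_p$, $a_{n+1}\equiv a_0\ ({\rm mod}\ p)$, and the sharpened norm bound on $F(a_{n+1})$) in tandem, since the unit property of $F'(a_{n+1})$ that is needed for the next iterate depends on them. Once these estimates are in hand, the completeness/continuity passage to the limit and the factorization argument for uniqueness are routine.
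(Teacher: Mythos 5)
Your proof is correct and complete: the Newton-iteration construction with the three inductive invariants ($a_n\in\mathbb{Z}_p$, $a_n\equiv\overline{a_0}\ (\mathrm{mod}\ p)$, and the decaying bound on $|F(a_n)|_p$), the passage to the limit by completeness and continuity, and the factorization argument for uniqueness are all sound. The paper itself does not prove Hensel's lemma --- it defers to the cited texts of Robert and Katok --- and your argument is exactly the standard quadratically convergent Newton's-method proof given in those references, so there is nothing to flag.
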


\begin{theorem}
A polynomial with integer coefficients has a root in $\mathbb{Z}_p$ if and only if it has an integer root modulo $p^m$ for any $m \in\mathbb{N}$.
\end{theorem}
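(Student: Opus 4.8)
The plan is to prove the two implications separately. The forward direction is routine. If $a \in \mathbb{Z}_p$ satisfies $f(a) = 0$, I would use Theorem \ref{canonical} to write $a = \sum_{i=0}^{\infty} a_i p^i$ and set $a^{(m)} = \sum_{i=0}^{m-1} a_i p^i \in \mathbb{Z}$, so that $|a - a^{(m)}|_p \le p^{-m}$. Since $f$ has integer (hence $p$-adic integer) coefficients, expanding $f$ about $a^{(m)}$ gives $f(a) - f(a^{(m)}) = (a - a^{(m)})\,h$ for some $h \in \mathbb{Z}_p$, whence $|f(a^{(m)})|_p = |f(a) - f(a^{(m)})|_p \le |a - a^{(m)}|_p \le p^{-m}$. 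By Lemma \ref{estimate} this is exactly $f(a^{(m)}) \equiv 0 \ ({\rm mod}\ p^m)$, so $a^{(m)}$ is the required integer root modulo $p^m$, and this works for every $m \in \mathbb{N}$.

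For the converse, the hypothesis furnishes, for each $m \in \mathbb{N}$, an integer $b_m$ with $0 \le b_m < p^m$ and $f(b_m) \equiv 0 \ ({\rm mod}\ p^m)$. The difficulty is that the $b_m$ need not be compatible, i.e.\ it may fail that $b_{m+1} \equiv b_m \ ({\rm mod}\ p^m)$, so one cannot directly read off the digits of a $p$-adic root. First I would organize the approximate solutions into a tree: for $n \ge 1$ put $R_n = \{\, c \in \mathbb{Z} : 0 \le c < p^n,\ f(c) \equiv 0 \ ({\rm mod}\ p^n)\,\}$, each nonempty by hypothesis, and observe that reduction modulo $p^n$ maps $R_{n+1}$ into $R_n$, since a polynomial with integer coefficients respects congruences modulo prime powers. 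Then I would run a K\"onig-type pigeonhole argument: call $c \in R_n$ \emph{extendable} if for every $N \ge n$ some element of $R_N$ reduces to $c$ modulo $p^n$; because each $R_N$ is finite and maps into $R_n$, a short counting argument shows $R_n$ always contains an extendable element, and that every extendable element of $R_n$ is the reduction modulo $p^n$ of an extendable element of $R_{n+1}$. Choosing such elements inductively produces a compatible sequence $(c_n)$ with $c_n \in R_n$ and $c_{n+1} \equiv c_n \ ({\rm mod}\ p^n)$, whose digits assemble into a single $a = \sum_{i=0}^{\infty} a_i p^i \in \mathbb{Z}_p$ with $a \equiv c_n \ ({\rm mod}\ p^n)$ for all $n$. (Equivalently, one may phrase this as: regard the $b_m$ as elements of the closed set $\mathbb{Z}_p = \{\,x : |x|_p \le 1\,\}$ and pass to a convergent subsequence $b_{m_k} \to a$; extracting this subsequence is precisely the pigeonhole argument on digits.)

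Finally, from $a \equiv c_n \ ({\rm mod}\ p^n)$ together with the fact that $f$ has integer coefficients, the same estimate as in the forward direction gives $|f(a)|_p \le p^{-n}$ for every $n$, hence $f(a) = 0$; in the subsequence formulation, continuity of $f$ on $\mathbb{Q}_p$ gives $f(b_{m_k}) \to f(a)$ while $|f(b_{m_k})|_p \le p^{-m_k} \to 0$, so again $f(a) = 0$. I expect the main obstacle to be the converse direction, specifically handling the incoherence of the approximate roots $b_m$ via the compactness/K\"onig argument; the rest is bookkeeping with Lemma \ref{estimate} and the elementary fact that congruences modulo $p^m$ are preserved by polynomials with integer coefficients.
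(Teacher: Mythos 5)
Your proof is correct, and it is essentially the standard argument that the paper itself does not reproduce but defers to Katok's book: the forward direction by truncating the canonical expansion and using that integer-coefficient polynomials are $p$-adically $1$-Lipschitz on $\mathbb{Z}_p$, and the converse by the K\"onig/compactness argument to extract a coherent sequence from the possibly incompatible approximate roots $b_m$. Your explicit handling of the incoherence of the $b_m$ (via the nested images of the finite sets $R_N$ in $R_n$) is exactly the point that a careless proof would miss, and it is done correctly here.
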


For the proof of the above theorems, one can consult a text on $p$-adic analysis by Katok \cite{katok}.

Having these ideas understood, we are now ready to present and validate our main results in the next section.

\section{Main Results}
Olver's method is a root-finding algorithm which is iteratively defined by 
\begin{equation}
\label{recursion}
x_{n+1}=x_n-\frac{f(x_n)}{f'(x_n)}-\frac{1}{2}\left\{ \frac{[f(x_n)]^2f''(x_n)}{[f'(x_n)]^3}\right\},\qquad \forall n\in\mathbb{N}_0. \tag{OM}
\end{equation}
This method is cubically convergent; that is, for every sequence $(x_n)_{n\in \mathbb{N}_0}$ generated through the recurrence \eqref{recursion}, we have 
\[
\lim_{n\to \infty} \frac{\left|x_{n+1} - L \right|}{\left|x_n - L \right|^3} = {\rm const.} > 0
\]
where $L$ is the limit of the sequence $(x_n)_{n\in \mathbb{N}_0}$.
Hence, it is faster compare to the well-known Newton's (or sometimes called Newton-Raphson's) method.
The algorithm is named after Frank W. J. Olver who first introduced the method in \cite{olver}.

In this section, as stated in the introduction, we are interested in finding a root of the polynomial equation $f(x)=0$ with $f(x) \in \mathbb{Z}_p[x]$ through Olver's method.
Throughout the discussion we denote, as usual, the derivative of $f$ as $f'$ and for simplicity, we use $a\equiv_p b$ to denote the congruence relation $a\equiv b$ (mod $p$).

Our main result is given as follows.
\begin{theorem}
\label{main}
Let $x_0 \in \mathbb{Z}$ such that $f(x_0)\equiv_p 0$ and $f'(x_0)\not\equiv_p 0$ where $f(x)\in\mathbb{Z}_p[x]$ is a polynomial of degree $q\in\mathbb{N}$. 
Define the sequence $(x_n)_{n\in\mathbb{N}_0}$ recursively by Olver's iterative formula \eqref{recursion}.
Then, we have the following results:
\begin{enumerate}
\item[{\rm (i)}]  
	\[
	\forall n\in \mathbb{N}_0: \ x_n\in\mathbb{Z}_p,\quad f'(x_n)\not\equiv_p0, \quad
	f(x_n) \equiv
	\left\{
	\begin{array}{ll}
	0 \left({\rm mod} \ p^{3^n}\right)&\text{if}\ p \neq 2,\\[1em]
	0 \left({\rm mod} \ p^{4^n}\right)&\text{if}\ p=2.
	\end{array}
	\right.
	\]
\item[{\rm (ii)}] The sequence $(x_n)_{n\in\mathbb{N}_0}$ generated by the recursion \eqref{recursion} converges to a unique zero $\xi\equiv_p x_0$ of $f$ in $\mathbb{Z}_p$.

\item[{\rm (iii)}] The convergence in {\rm (ii)} is cubic for $p\neq2$ and is of fourth-order for $p=2$.   
\end{enumerate}
\end{theorem}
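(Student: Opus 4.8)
The plan is to proceed by induction on $n$, simultaneously establishing all three parts, since they are intertwined: the congruence in (i) drives both the convergence in (ii) and its rate in (iii). First I would set up the induction hypothesis: $x_n\in\mathbb{Z}_p$, $f'(x_n)\not\equiv_p 0$, and $f(x_n)\equiv 0\ (\mathrm{mod}\ p^{k_n})$ where $k_n=3^n$ if $p\neq 2$ and $k_n=4^n$ if $p=2$. The base case $n=0$ is the hypothesis on $x_0$. For the inductive step, the key observation is that $f'(x_n)\not\equiv_p 0$ means $|f'(x_n)|_p=1$, so $f'(x_n)\in\mathbb{Z}_p^\times$; combined with $f(x_n)\equiv 0\ (\mathrm{mod}\ p^{k_n})$, i.e. $|f(x_n)|_p\leq p^{-k_n}$, every term in \eqref{recursion} lies in $\mathbb{Z}_p$, and in fact the correction $x_{n+1}-x_n$ has $p$-adic norm at most $p^{-k_n}$, so $x_{n+1}\equiv x_n\ (\mathrm{mod}\ p^{k_n})$ and hence $x_{n+1}\equiv_p x_0$; since $f'$ has $p$-adic-integer coefficients, $f'(x_{n+1})\equiv_p f'(x_n)\not\equiv_p 0$.

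The heart of the argument is estimating $f(x_{n+1})$. Here I would use the finite Taylor expansion of the polynomial $f$ about $x_n$: writing $h=x_{n+1}-x_n$,
\[
f(x_{n+1})=f(x_n)+f'(x_n)h+\tfrac{1}{2}f''(x_n)h^2+\tfrac{1}{6}f'''(x_n)h^3+\cdots,
\]
which is an identity for polynomials with only finitely many terms. The iteration \eqref{recursion} is designed precisely so that, after substituting $h=-\frac{f(x_n)}{f'(x_n)}-\frac{1}{2}\frac{f(x_n)^2f''(x_n)}{f'(x_n)^3}$, the terms of order $f(x_n)$, $f(x_n)^2$ vanish in $f(x_n)+f'(x_n)h$ up to an $f(x_n)^3$ correction, and everything remaining is a $\mathbb{Z}_p$-multiple of $f(x_n)^3$. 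So I would collect terms and show $f(x_{n+1})=f(x_n)^3\cdot g_n$ for some $g_n\in\mathbb{Z}_p$ — the coefficients $\tfrac12 f''(x_n)$, $\tfrac16 f'''(x_n)$, etc., are in $\mathbb{Z}_p$ for $p\neq 2$, which is what makes the generic estimate work, and $f'(x_n)^{-1}\in\mathbb{Z}_p^\times$ so no denominators escape. This gives $|f(x_{n+1})|_p\leq |f(x_n)|_p^3\leq p^{-3k_n}\leq p^{-k_{n+1}}$ when $p\neq 2$ (since $3\cdot 3^n=3^{n+1}$). For $p=2$ one needs the sharper bookkeeping: the coefficient $\tfrac12 f''(x_n)$ carries a factor that is only guaranteed in $\tfrac12\mathbb{Z}_2$, but the combination appearing in $f(x_{n+1})$ turns out to gain an extra factor of $2$, and moreover $|f(x_n)|_2\leq 2^{-4^n}$ with $4^n\geq 2$ already provides slack; I would track the $2$-adic valuations of the binomial-type coefficients carefully to land the exponent $4^{n+1}=4\cdot 4^n$ rather than merely $3\cdot 4^n$.

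With (i) in hand, part (ii) is quick: $x_{n+1}\equiv x_n\ (\mathrm{mod}\ p^{k_n})$ with $k_n\to\infty$ shows $(x_n)$ is Cauchy in the complete space $\mathbb{Z}_p$, hence converges to some $\xi\in\mathbb{Z}_p$; continuity of $f$ and $|f(x_n)|_p\to 0$ give $f(\xi)=0$, and $\xi\equiv_p x_0$ follows from $x_n\equiv_p x_0$ for all $n$. Uniqueness of such a root congruent to $x_0$ mod $p$ is exactly Hensel's lemma applied to the pair $(x_0$-reduction$)$, using $f'(x_0)\not\equiv_p 0$. For part (iii), I would return to the identity $f(x_{n+1})=f(x_n)^3 g_n$ and the relation $\xi-x_{n+1}=-\big(f(x_{n+1})/f'(x_{n+1})\big)+O(f(x_{n+1})^2)$ (or more simply estimate $|\xi-x_n|_p$ directly in terms of $|f(x_n)|_p$, since $f'$ is a unit near $\xi$): one gets $|\xi-x_n|_p=|f(x_n)|_p\cdot|f'(x_n)|_p^{-1}=|f(x_n)|_p$, whence $|\xi-x_{n+1}|_p=|f(x_{n+1})|_p\asymp |f(x_n)|_p^3=|\xi-x_n|_p^3$ for $p\neq 2$, giving exact cubic convergence; the $p=2$ refinement from (i) upgrades this to fourth order. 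The main obstacle I anticipate is the $p=2$ case of part (i): ensuring the division by $2$ in \eqref{recursion} and the $\tfrac12 f''$ term do not destroy integrality, and then squeezing out the extra power of $2$ needed to reach the exponent $4^{n+1}$ instead of $3\cdot 4^n$ — this requires a careful term-by-term valuation analysis of the Taylor expansion rather than the crude bound that suffices for odd $p$.
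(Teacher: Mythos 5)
Your plan for odd $p$ is essentially the paper's own argument: induction on $n$, the truncated Taylor expansion $f(x_n+y_n)\equiv f(x_n)+f'(x_n)y_n+\tfrac12 f''(x_n)y_n^2 \pmod{p^{3\cdot 3^n}}$, and the algebraic identity
\[
f+f'y+\tfrac12 f''y^2=\frac{f^3\,(f'')^2\bigl(4(f')^2+ff''\bigr)}{8(f')^6},
\qquad y=-\frac{f}{f'}-\frac{f^2f''}{2(f')^3},
\]
which exhibits $f(x_{n+1})$ as $f(x_n)^3$ times an element of $\mathbb{Z}_p$ when $p\neq 2$ (the higher Taylor terms are handled by your correct observation that the Hasse derivatives $f^{(j)}/j!$ have coefficients in $\mathbb{Z}_p$ and that $y\in f\cdot\mathbb{Z}_p$). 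Your treatment of (ii) is actually more complete than the paper's, which stops at ``Cauchy, hence convergent'' and asserts uniqueness; passing $|f(x_n)|_p\to 0$ through continuity to get $f(\xi)=0$ and invoking Hensel's lemma for uniqueness is the right way to close that part. (For (iii), note that $f(x_{n+1})=f(x_n)^3g_n$ only bounds the error; it gives \emph{at least} cubic order unless one also shows $g_n$ is a unit.)

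The genuine gap is exactly the step you defer: ``squeezing out the extra power of $2$'' to reach the exponent $4^{n+1}$ when $p=2$. That bookkeeping cannot be made to work, because the claim is false. The obstruction is the cubic Taylor term $\tfrac16 f'''(x_n)\,y_n^3$: one has $v_2(y_n)\geq 4^n$ and $\tfrac16 f'''\in\mathbb{Z}_2$, but nothing forces this coefficient to be even, so the term can have $2$-adic valuation exactly $3\cdot 4^n<4\cdot 4^n$. Concretely, take $f(x)=x^3+x+2$ and $x_0=0$: then $f(x_0)=2\equiv_2 0$, $f'(x_0)=1\not\equiv_2 0$, $f''(x_0)=0$, the iteration gives $x_1=-2$, and $f(x_1)=-8$, so $f(x_1)\not\equiv 0\pmod{2^4}$. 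Hence part (i) for $p=2$, and with it the fourth-order claim in (iii), fails; only the generic exponent $3^n$ survives. (The paper's own proof has the same defect: its Taylor lemma with $k=3$ yields a congruence only modulo $2^{3\cdot 4^n}$, yet the conclusion is asserted modulo $2^{4^{n+1}}$.) So for $p\neq 2$ your proposal goes through, but for $p=2$ you should either prove only the $3^n$ congruence and cubic convergence, or impose additional hypotheses (e.g.\ on $f'''$ at $x_0$) that genuinely supply the missing factor of $2$.
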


Before we prove the above results, we first establish the following lemma which will be central to our proof.

\begin{lemma}
\label{lemma}
Let $a,y\in\mathbb{Z}_p$ and suppose that $y\equiv_{p^m}0$ for some positive integer $m$.
Then, 
\[
f(a+y)\equiv_{p^{km}}\sum_{j=0}^{k-1}\frac{f^{(j)}(a)}{j!}y^j, \qquad\forall k \in \{1,2,\ldots,{\rm deg}(f)\},
\]
where ${\rm deg}(f)$ denotes the degree of the polynomial $f\in\mathbb{Z}_p[x]$.
\end{lemma}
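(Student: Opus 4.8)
The plan is to reduce the statement to the exact finite Taylor expansion of a polynomial together with the elementary fact that a $p^m$-divisible element raised to the $j$-th power is $p^{jm}$-divisible. First I would record the purely formal identity
\[
f(a + y) = \sum_{j=0}^{q} \frac{f^{(j)}(a)}{j!}\, y^j,
\]
with $q = \deg(f)$, valid as an identity of polynomials in $y$ over $\mathbb{Q}_p$; it can be checked termwise on the monomials $x^n$ via the binomial theorem, and being finite and exact it leaves no remainder term. The next point, and the one genuinely requiring care, is that each coefficient $f^{(j)}(a)/j!$ in fact lies in $\mathbb{Z}_p$: writing $f(x)=\sum_{n=0}^{q} c_n x^n$ with $c_n\in\mathbb{Z}_p$, one has
\[
\frac{f^{(j)}(x)}{j!}=\sum_{n=j}^{q}\binom{n}{j} c_n\, x^{n-j},
\]
and since the binomial coefficients are ordinary integers (so the division by $j!$ is absorbed before any $p$ can appear in a denominator) and $\mathbb{Z}_p$ is a ring containing $\mathbb{Z}$, the right-hand side has coefficients in $\mathbb{Z}_p$; evaluating at $a\in\mathbb{Z}_p$ keeps us in $\mathbb{Z}_p$. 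Hence $|f^{(j)}(a)/j!|_p\le 1$ for every $j$.

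With that in hand, I would isolate the tail by subtracting the partial sum:
\[
f(a+y)-\sum_{j=0}^{k-1}\frac{f^{(j)}(a)}{j!}\,y^j=\sum_{j=k}^{q}\frac{f^{(j)}(a)}{j!}\,y^j.
\]
By hypothesis $y\equiv_{p^m}0$, so $|y|_p\le p^{-m}$ by Lemma \ref{estimate}, and then $|y^j|_p=|y|_p^j\le p^{-jm}\le p^{-km}$ for every $j\ge k$, using multiplicativity of $|\cdot|_p$. Combining this with $|f^{(j)}(a)/j!|_p\le 1$ and the ultrametric inequality, the right-hand side above has $p$-adic norm at most $p^{-km}$; applying Lemma \ref{estimate} once more translates this back into the congruence $f(a+y)\equiv_{p^{km}}\sum_{j=0}^{k-1}\frac{f^{(j)}(a)}{j!}y^j$, as claimed. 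The restriction $k\le\deg(f)$ is used only to ensure the partial sum is a genuine truncation of the Taylor expansion; for larger $k$ the statement is vacuously true.

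So the only real obstacle is the integrality claim $f^{(j)}(a)/j!\in\mathbb{Z}_p$ — a priori the factor $1/j!$ threatens to introduce a denominator divisible by $p$ — and it is resolved by the identity above, which shows the division is always absorbed by the integer binomial coefficients. Every other step is routine bookkeeping with the multiplicative and ultrametric properties of $|\cdot|_p$ and the dictionary between congruences and norm bounds provided by Lemma \ref{estimate}.
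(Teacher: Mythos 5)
Your proof is correct and follows essentially the same route as the paper: write out the exact finite Taylor expansion of $f(a+y)$ and observe that the tail $\sum_{j=k}^{q}\frac{f^{(j)}(a)}{j!}y^j$ has $p$-adic norm at most $p^{-km}$ because $|y|_p\le p^{-m}$. In fact your version is slightly more careful than the paper's, which silently uses that the divided derivatives $f^{(j)}(a)/j!$ lie in $\mathbb{Z}_p$; your identity $\frac{f^{(j)}(x)}{j!}=\sum_{n=j}^{q}\binom{n}{j}c_n x^{n-j}$ supplies exactly the justification that step needs.
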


\begin{proof}
Let $f\in\mathbb{Z}_p[x]$ with ${\rm deg}(f)=q \in \mathbb{N}$ and $k\in\{1,2,\ldots,q\}$. 
Further, assume that $a,y\in\mathbb{Z}_p$ and suppose $y\equiv_{p^m}0$ for some positive integer $m$.
Using the well-known Taylor expansion formula (TEF) to $f(a+y)$, we get
\[
f(a+y) 
=f(a)+\cdots+\frac{f^{(k-1)}(a)}{(k-1)!} y^{k-1}+y^k\sum_{j=0}^{q-k} \frac{f^{(j+k)}(a)}{(j+k)!}y^j.
\]
But, by assumption, $y\equiv_{p^m}0$ for some positive integer $m$. 
Hence, $y^k\equiv_{p^{km}}0$ for every $k$. 
Thus we have
\[
f(a+y)\equiv_{p^{km}} f(a)+\cdots+\frac{f^{(k-1)}(a)}{(k-1)!}y^{k-1},
\]
which is desired.
\end{proof}
%
By above lemma, one easily finds that $f(a+y)\equiv_{p^m}f(a)$, $f(a+y)\equiv_{p^{2m}}f(a)+f'(a)y$ and $f(a+y)\equiv_{p^{3m}}f(a)+f'(a)y+f''(a)y^2/2$. 

Now, we are in the position to prove Theorem \ref{main}.
From here on we assume $f$ to take the form $f(x)=a_0+a_1x+a_2x^2+\ldots+a_qx^q \in \mathbb{Z}_p[x]$.

\begin{proof}[{\bf Proof of Theorem \ref{main}}]
Throughout the proof we write $a \equiv_{p^m} b$ when we wish to say that $|a-b|_p \leqslant p^{-m}$.
Let $x_0 \in \mathbb{Z}$ such that $f(x_0)\equiv_p 0$ and $f'(x_0)\not\equiv_p 0$. 
Furthermore, define the sequence $(x_n)_{n\in\mathbb{N}_0}$ recursively by \eqref{recursion}. 
We first prove (i).\\

\underline{Proof of i.}
The proof of this part proceeds by induction on $n$. 
We show that $f(x_n) \equiv_{p^{3^n}}0$ for $p\neq 2$ and $f(x_n) \equiv_{p^{4^n}}0$ for $p=2$ and $f'(x_n)\not\equiv_p 0$ for all $n \in \mathbb{N}_0$. 
First we note that since $x_0 \in \mathbb{Z}$, then $x_0\in\mathbb{Z}_p$.
So for the basis step we have, by definition,  
\[
f(x_1) = f\left( x_0 -\frac{f(x_0)}{f'(x_0)}-\frac{1}{2}\left\{ \frac{[f(x_0)]^2f''(x_0)}{[f'(x_0)]^3}\right\}\right)
=:f(x_0+y_0).
\]
In view of Lemma \ref{lemma}, we have
\[
f(x_0+y_0) \equiv_{p^3} f(x_0)+f'(x_0)y_0 + \frac{f''(x_0)}{2}y_0^2.
\]
But, the right hand side of the above relation can be simplified as
\[
\frac{[f(x_0)]^3 [f''(x_0)]^2 \left\{ 4 [f'(x_0)]^2+f(x_0)f''(x_0)\right\}}{8 [f'(x_0)]^6}=:\frac{U}{V}.
\]
Clearly, for $p\neq 2$ and by the fact that $f(x_0)\equiv_p 0$ and $f'(x_0)\not\equiv_p 0$, it follows that 
\[
f(x_1) \equiv_{p^3} f(x_0+y_0) \equiv_{p^3} 0.
\]
Now from the form of $f$, we have $f''(x)=\sum_{j=0}^{q-2}=(j+1)(j+2)a_{j+2}x^j$.
But since $2\mid(j+1)(j+2)$, then $f''(x) \equiv_2 0$.
So for $p=2$, we see that $U \equiv_{2^7} 0$ and $V\equiv_{2^3}0$.
Hence, $f(x_1) \equiv_{p^4} 0$ for $p=2$. 
Similarly, since $f'(x_1) = f'(x_0+y_0)$, then by Lemma \ref{lemma}, we have $f'(x_1) \equiv_p f'(x_0)$.
We have just shown that the result holds for $n=1$. 
Now, for the induction hypothesis, we suppose that for some $n_0\in\mathbb{N}$ the following results hold:
\[
	\forall n \geq n_0: \ x_n\in\mathbb{Z}_p,\quad f'(x_n)\not\equiv_p0, \quad
	f(x_n) \equiv
	\left\{
	\begin{array}{ll}
	0 \left({\rm mod} \ p^{3^n}\right)&\text{if}\ p \neq 2,\\[1em]
	0 \left({\rm mod} \ p^{4^n}\right)&\text{if}\ p=2.
	\end{array}
	\right.
\]
Let
\[
y_n:=-\frac{f(x_n)}{f'(x_n)}-\frac{1}{2}\left\{ \frac{[f(x_n)]^2f''(x_n)}{[f'(x_n)]^3}\right\}.
\]
Since $x_{n+1} = x_n+y_n$, then it follows that $|x_{n+1}|_p = |x_n+y_n|_p \leq \max\{|x_n|_p,|y_n|_p\}$.
Note, however, that we have the estimate $|y_n|_p \leq p^{-3^n}$ for any fixed prime $p$. 
So $|x_{n+1}|_p \leq \max\{1,p^{-3^n}\} =1$.
Therefore, by definition of elements in $\mathbb{Z}_p$, we have $x_{n+1}\in\mathbb{Z}_p$.
On the other hand, since $f'(x_{n+1}) = f'(x_n+y_n)$, then by Lemma \ref{lemma} it follows that
\[
f'(x_{n+1})= f'(x_n)+y_n\sum_{j=0}^{q-1} \frac{f^{(j+2)}(x_n)}{(j+1)!}y_n^j.
\]
Taking modulo $p$ on both sides of the above equation and by Lemma \ref{estimate}, we get $f'(x_{n+1})\equiv_p f'(x_n) \not\equiv_p0$. 
Moreover, since $f(x_{n+1})=f(x_n+y_n)$ and by Lemma \ref{lemma}, we have the equation
\[
f(x_{n+1}) \equiv_{p^{3^{n+1}}} f(x_n)+f'(x_n)y_n + \frac{f''(x_n)}{2}y_n^2.
\]
Expanding the right hand side of the above relation, we get
\[
\frac{[f(x_n)]^3 [f''(x_n)]^2 \left\{ 4 [f'(x_n)]^2+f(x_n)f''(x_n)\right\}}{8 [f'(x_n)]^6}=:\frac{U_n}{V_n}.
\]
Similar to what we have observed earlier, we'll obtain the congruence relation $f(x_{n+1})\equiv_{p^{3^{n+1}}}0$ for $p\neq 2$. 
On the other hand, for $p=2$ we get (as $U_n\equiv_{2^7}$ and $V\equiv_{2^3}0$) $f(x_{n+1})\equiv_{p^{4^{n+1}}}0$.
It now follows by induction that 
\[
	\forall n\in \mathbb{N}_0: \ x_n\in\mathbb{Z}_p,\quad f'(x_n)\not\equiv_p0, \quad
	f(x_n) \equiv
	\left\{
	\begin{array}{ll}
	0 \left({\rm mod} \ p^{3^n}\right)&\text{if}\ p \neq 2,\\[1em]
	0 \left({\rm mod} \ p^{4^n}\right)&\text{if}\ p=2.
	\end{array}
	\right.
\]
thereby validating our first result (i).\\

\underline{Proof of ii.}
Now for the second part we need to show that, with the same assumption as in the first part, the sequence $(x_n)_{n\in\mathbb{N}_0}$ defined recursively by \eqref{recursion} converges to a unique zero $\xi\equiv_p x_0$ of $f$ in $\mathbb{Z}_p$.  
To establish this result, we first prove that $(x_n)_{n\in\mathbb{N}_0}$ is Cauchy. 
So we proceed as follows.
Note that
\begin{align*}
\left|x_{n+1}-x_n\right|_p
&=\left| x_n-\frac{f(x_n)}{f'(x_n)}-\frac{1}{2}\left\{ \frac{[f(x_n)]^2f''(x_n)}{[f'(x_n)]^3}\right\}-x_n\right|_p\\
&=\frac{\left|2f(x_n)[f'(x_n)]^2-[f(x_n)]^2f''(x_n)\right|_p}{\left|2[f'(x_n)]^3\right|_p}
=:\frac{|A|_p}{|B|_p}.
\end{align*}
From here we consider two cases: $p\neq 2$ and $p=2$.\\

\underline{Case $p\neq 2$.} If $p\neq2$, then $A\equiv_{p^{3^n}} 0$ since $f(x_n)\equiv_{p^{3^n}}0$ by (i). 
Furthermore, $B\not\equiv_p0$ since $f'(x_n)\not\equiv_p 0$ by (i) and $p\neq 2$.
Hence, from Lemma \ref{estimate}, we have the estimate $\left|x_{n+1}-x_n\right|_p \leq p^{-3^n}$.
Letting $n \rightarrow \infty$, we see that $\lim_{n\rightarrow\infty}\left|x_{n+1}-x_n\right|_p=0$, i.e., $(x_n)_{n\in\mathbb{N}_0}$ is Cauchy. 
So for the case $p\neq 2$, we have $(x_n)_{n\in\mathbb{N}_0}$ is Cauchy.\\

\underline{Case $p= 2$.} If, on the other hand, $p=2$, then $A\equiv_{p^{4^{n+1}}}0$ and $B\equiv_p 0$.
Hence, by Lemma \ref{estimate}, we have $|A|_p \leq p^{-4^n-1}$ and $|B|_p \leq p^{-1}$.
Then, $|A|_p/|B|_p \leq p^{-4^n}$ and so $|x_{n+1}-x_n|_p \rightarrow 0$ as $n\rightarrow \infty$.
Therefore, for $p=2$, $(x_n)_{n\in\mathbb{N}_0}$ is again a Cauchy sequence.

We see that, in any case, the sequence $(x_n)_{n\in\mathbb{N}_0}$ is Cauchy.
Thus, it must converges to some number in $\mathbb{Z}_p$.
Finally, the uniqueness of the zero $\xi \equiv_p x_0$ of $f$ in $\mathbb{Z}_p$ follows directly from the uniqueness of the sequence $(x_n)_{n\in\mathbb{N}_0}$. \\

\underline{Proof of iii.}
As we have seen from the previous item, we have the estimate 
\[
	\forall n \in\mathbb{N}_0: 
	|x_{n+1} - x_n| \leq
	\left\{
	\begin{array}{ll}
	p^{-3^n},&\text{if}\ p \neq 2,\\[1em]
	p^{-4^n},&\text{if}\ p=2.
	\end{array}
	\right.
\]
Hence, it follows immediately that the sequence of approximants $(x_n)_{n\in\mathbb{N}_0}$ converges to a unique zero $\xi \equiv_p x_0$ of $f$ in $\mathbb{Z}_p$ cubically for $p\neq 3$ and in quartic sense for $p=2$.
This completes the proof of the theorem.
\end{proof}

\section{Conclusion}

We have considered in this work the problem of finding a $p$-adic root of a general polynomial equation $P(x)=0$ with $P(x)\in\mathbb{Z}_p[x]$.  
It was shown that the sequence $(x_n)_{n\in\mathbb{N}_0}$ defined recursively by the iterative formula of Olver's method converges to a unique zero $\xi\equiv_p x_0$ of $P$ in $\mathbb{Z}_p$.  
Consequently, the analogue of Olver's method in the $p$-adic case converges to a root in $\mathbb{Z}_p$ in cubic and quartic sense for $p\neq2$ and $p=2$, respectively. 
The result, moreover, suggests that Olver's method is more efficient in computing $2$-adic roots of polynomial equations $P(x)=0$ compare to Halley's (cf. \cite{rabago}), and Steffensen's (cf. \cite{bacani}) and Newton-Raphson's (cf \cite{rabago0}) method which has cubic and quadratic rate of convergence, respectively.

\section{Acknowledgement}

The author would like to thank Prof. Peter J. Olver of the University of Minnesota for bringing the original paper from which Olver's method first appeared to his attention.

\newpage

\end{document}